\documentclass[12pt, a4paper]{amsart}
\usepackage[utf8]{inputenc}
\usepackage{amsfonts,amssymb,amsxtra,amsthm,amsmath,amscd,mathrsfs,cite}
\usepackage[all]{xy}%mj-eucal,eucal 可修改 LaTeX 的数学字体命令 \mathcal。当加载该宏包后，使用 \mathcal 命令，调出的是欧拉书写体，而不是通常的计算机现代书写体。
\usepackage{tikz}
\usepackage{verbatim}
\usepackage[normalem]{ulem}
\usepackage{soul}
\usepackage{color}

\setstcolor{red}

\makeatletter
\@namedef{subjclassname@2010}{%
	\textup{2010} Mathematics Subject Classification}
\makeatother

\def\geq{\geqslant}

\theoremstyle{plain}
\newtheorem{theorem}{Theorem}[section]
\theoremstyle{definition}

\newtheorem{Conjecture}[theorem]{Conjecture}

\newtheorem{Lemma}[theorem]{Lemma}

\theoremstyle{remark}

\numberwithin{equation}{section}

\addtocounter{footnote}{1}

\begin{document}

\title[On the Double Lambert Series Conjecture of Andrews--Dixit--Schultz--Yee]{On the Double Lambert Series Conjecture of Andrews--Dixit--Schultz--Yee}

\author{Qianwen Fang}
\address{School of Economics, Shandong University, Jinan, Shandong, People's Republic of China}
\email{qwfang1998@163.com}
\subjclass[2020]{11B65, 11P81}
\keywords{Lambert series, Double Lambert series}
\thanks{ORCID:0009-0001-7695-2599}
\begin{abstract}
Andrews, Dixit, Schultz, and Yee conjecture the parity of a double Lambert series. In 2026, Amdeberhan, Andrews, and Ballantine offer some ideas that are pointing in the right direction for the proof. In this paper, we complete the rest of their proof.
\end{abstract}
\maketitle

\section{Introduction}

In \cite{2}, Andrews, Dixit, Schultz, and Yee proposed the following conjecture regarding the parity of a double Lambert series.

\begin{Conjecture}
\label{c1}
The following is an odd function of $q$, $|q|<1$:
\begin{align*}
Y(q):=\sum_{m,n\geq 1} \frac{(-q)^{2mn+m}}{(1+q^m)(1-q^{2m-1})}.
\end{align*}
\end{Conjecture}

As shown in \cite{3}, $Y(q)$ can be expressed as
\begin{align}
\label{2}
Y(q)=-\sum_{k\geq 2}\frac{q^k}{1+q^{2k-1}}\sum_{n=1}^{k-1}\frac{q^n}{1+q^n},
\end{align}
However, the authors were unable to complete the proof of Conjecture \ref{c1} using this representation. The difficulty in the final step may arise from the formulation in \eqref{2}; instead, we consider the following form:
\begin{align}
\label{1}
Y(q)&=\sum_{\substack{m,n\geq 1 \\ k,l\geq 0}}(-1)^{m+k} q^{2mn+nk+2ml+m-l}=\sum_{\substack{m \geq 1 \\ k \geq 0}}\frac{(-1)^{m+k}q^{3m+k}}{(1-q^{2m-1})(1-q^{2m+k})}\\
\nonumber
&=\sum_{m \geq 1}\frac{(-1)^mq^m}{1-q^{2m-1}}\sum_{k=2m}^\infty \frac{(-1)^kq^k}{1-q^k}.
\end{align}
Building on the approach in \cite[Theorem 5.7]{3}, we provide the remaining steps to complete the proof.

After this paper was written, we learned from George Andrews that Cui and Tang had also independently proven Conjecture \ref{c1} using a similar approach.

Throughout this paper, we adopt the standard notation for the $q$-Pochhammer symbol
\[
(a; q)_\infty := \prod_{n=0}^{\infty} (1 - aq^n).
\]
Furthermore, for the product of multiple $q$-Pochhammer symbols, we use the following condensed notation:
\[
(a_1, a_2, \dots, a_k; q)_\infty := (a_1; q)_\infty (a_2; q)_\infty \dots (a_k; q)_\infty.
\]

\section{Proof of Conjecture \ref{c1}}

We define the following auxiliary functions
\begin{align*}
Z(q):=\sum_{m \geq 1}\frac{(-1)^mq^m}{1-q^{2m-1}}\sum_{k=1}^{2m-1} \frac{(-1)^kq^k}{1-q^k},
\end{align*}
\begin{align*}
A(q):=\sum_{i \geq 0}\sum_{j=i+1}^\infty \frac{q^{j+1}}{(1+q^{2i+1})(1+q^{2j+1})},
\end{align*}
\begin{align*}
B(q):=\sum_{i \geq 0}\sum_{j=i+1}^\infty \frac{q^{i+2j+2}}{(1+q^{2i+1})(1+q^{2j+1})},
\end{align*}
\begin{align*}
B_1(q):=\sum_{i \geq 0}\sum_{j=0}^{i} \frac{q^{i+2j+2}}{(1+q^{2i+1})(1+q^{2j+1})},
\end{align*}
\begin{align*}
D_1(q):=Y(q)+Z(q)=\sum_{m \geq 1}\frac{(-1)^mq^m}{1-q^{2m-1}}\sum_{k\geq 1} \frac{(-1)^kq^k}{1-q^k},
\end{align*}
and
\begin{align*}
D_2(q):=B(q)+B_1(q)&=\sum_{i \geq 0}\frac{q^i}{1+q^{2i+1}}\sum_{j\geq 0} \frac{q^{2j+2}}{1+q^{2j+1}}\\
&=\sum_{m \geq 1}\frac{(-1)^mq^m}{1-q^{2m-1}}\sum_{k\geq 1} \frac{(-1)^kq^{k}}{1-q^{2k}}.
\end{align*}
The last equality can be verified in a similar fashion to \eqref{1}. A direct calculation reveals that
\begin{align*}
Z(q)&=\sum_{\substack{k\geq 1 \\ m,i,j\geq 0}}(-1)^{m+k+1}q^{3k+m+2mi+2ki+2kj-i-j-1}+\sum_{\substack{m,k\geq 1 \\ i,j\geq 0}}(-1)^{m+k}q^{3k+m+2mi+2ki+2kj-i}\\
&=A(q)+B(q).
\end{align*}
Consequently, we obtain the relation
$$
Y(q)=D_1(q)-D_2(q)-A(q)+B_1(q).
$$
Conjecture \ref{c1} then follows from the two lemmas below.

\begin{Lemma}
We have the following identity:
$$
B_1(q)=A(-q).
$$
\end{Lemma}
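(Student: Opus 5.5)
The plan is to bring $B_1(q)$ into the shape of $A(-q)$ by trading two geometric-series expansions for summations in the other indices. First write out the target. Replacing $q$ by $-q$ in $A$ and using that $2i+1$ and $2j+1$ are odd,
\[
A(-q)=\sum_{i\geq 0}\sum_{j\geq i+1}\frac{(-1)^{j+1}q^{j+1}}{(1-q^{2i+1})(1-q^{2j+1})}.
\]
It is therefore enough to show that $B_1(q)$ equals this double sum.

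\emph{Step 1: sum over the larger index.} In $B_1(q)$, fix $j$ and sum over $i\geq j$. Expand $1/(1+q^{2i+1})=\sum_{a\geq 0}(-1)^a q^{(2i+1)a}$ and sum the resulting geometric series in $i$:
\[
\sum_{i\geq j}\frac{q^{i}}{1+q^{2i+1}}=\sum_{a\geq 0}(-1)^a q^{a}\sum_{i\geq j}q^{i(2a+1)}=\sum_{a\geq 0}\frac{(-1)^a q^{a+j(2a+1)}}{1-q^{2a+1}}.
\]
This gives
\[
B_1(q)=\sum_{a\geq 0}\frac{(-1)^a q^{a}}{1-q^{2a+1}}\sum_{j\geq 0}\frac{q^{2j+2+(2a+1)j}}{1+q^{2j+1}}.
\]

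\emph{Step 2: sum over $j$.} Expand $1/(1+q^{2j+1})=\sum_{b\geq 0}(-1)^b q^{(2j+1)b}$. The sum over $j$ then becomes $q^{2+b}\sum_{j\geq 0}q^{j(2a+2b+3)}=q^{b+2}/(1-q^{2a+2b+3})$. Hence
\[
B_1(q)=\sum_{a,b\geq 0}\frac{(-1)^{a+b}q^{a+b+2}}{(1-q^{2a+1})(1-q^{2a+2b+3})}.
\]

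\emph{Step 3: reindex.} Put $i=a$ and $j=a+b+1$, so that $j$ runs over all integers $\geq i+1$. Then $(-1)^{a+b}=(-1)^{j+1}$, $q^{a+b+2}=q^{j+1}$ and $2a+2b+3=2j+1$. This is exactly the displayed expression for $A(-q)$, which proves the lemma.

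The only point needing care is justifying the interchanges of summation in Steps 1 and 2. For $|q|<1$, every series involved converges absolutely: the fully expanded multiple sums are dominated by sums of $|q|$ raised to exponents that grow linearly in each index. So rearrangement is legitimate, and I expect no real obstacle. The main thing to get right is the order of summation: one should sum first over the larger index $i\geq j$ and then over $j$, since summing in the other order does not produce the denominators $1-q^{2j+1}$.
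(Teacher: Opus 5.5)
Your proof is correct and is essentially the paper's argument. The paper shifts $i\mapsto i+j$, expands both denominators into the same absolutely convergent fourfold series, and resums the geometric series in the original indices, which is what your Steps 1--2 do one index at a time; the only cosmetic difference is that the paper tracks signs through $(-1)^{m+n}q^{E}=(-1)^{i+j}(-q)^{E}$ to land directly on $A(-q)$, whereas you write $A(-q)$ out with denominators $1-q^{2i+1}$, $1-q^{2j+1}$ and match after setting $i=a$, $j=a+b+1$.
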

\begin{proof}
\begin{align*}
B_1(q)&=\sum_{i,j\geq 0} \frac{q^{i+3j+2}}{(1+q^{2i+2j+1})(1+q^{2j+1})}\\
&=\sum_{m,n,i,j\geq 0}(-1)^{m+n}q^{2im+2jm+2jn+n+i+m+3j+2}\\
&=\sum_{m,n,i,j\geq 0}(-1)^{i+j}(-q)^{2im+2jm+2jn+n+i+m+3j+2}\\
&=\sum_{m,n\geq 0} \frac{(-q)^{m+n+2}}{(1+(-q)^{2n+2m+3})(1+(-q)^{2m+1})}\\
&=A(-q).
\end{align*}
\end{proof}

\begin{Lemma}
The following relation holds:
$$
D_1(q)-D_2(q)=q\frac{(q^4;q^4)_\infty^4}{(q^2;q^2)_\infty^2}\sum_{k\geq 1}\frac{(-1)^{k-1}q^{2k}}{1-q^{2k}}.
$$
\end{Lemma}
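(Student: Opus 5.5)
The proof reduces to one algebraic simplification of the inner sums followed by one classical theta-function identity for the outer factor $\sum_{m}$. Put
\[
F(q):=\sum_{m\geq 1}\frac{(-1)^mq^m}{1-q^{2m-1}} .
\]
By the definitions of $D_1(q)$ and $D_2(q)$, both are $F(q)$ times an inner sum over $k$, so
\[
D_1(q)-D_2(q)=F(q)\sum_{k\geq1}(-1)^kq^k\left(\frac{1}{1-q^k}-\frac{1}{1-q^{2k}}\right).
\]
Since $\frac{1}{1-q^k}-\frac{1}{1-q^{2k}}=\frac{q^k}{1-q^{2k}}$, the bracketed sum equals $\sum_{k\geq1}(-1)^kq^{2k}/(1-q^{2k})=-\sum_{k\geq1}(-1)^{k-1}q^{2k}/(1-q^{2k})$. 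This is exactly the Lambert series in the statement, up to sign. The lemma is therefore equivalent to
\[
-F(q)=q\frac{(q^4;q^4)_\infty^4}{(q^2;q^2)_\infty^2}.
\]

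Next I rewrite $F$. Shifting $m=n+1$ gives $-F(q)=q\,G(q)$, where
\[
G(q)=\sum_{n\geq0}\frac{(-1)^nq^n}{1-q^{2n+1}}=\sum_{n,j\geq0}(-1)^nq^{n+2nj+j}.
\]
The key observation is that $(2n+1)(2j+1)=2(n+2nj+j)+1$. Hence if $N=n+2nj+j$, then $d=2n+1$ runs over the divisors of $2N+1$, and $(-1)^n=\chi_{-4}(d)$. This yields
\[
G(q)=\sum_{N\geq0}q^N\sum_{d\mid 2N+1}\chi_{-4}(d)=\sum_{N\geq0}\bigl(d_{1,4}(2N+1)-d_{3,4}(2N+1)\bigr)q^N,
\]
where $d_{r,4}(M)$ counts the divisors of $M$ congruent to $r$ modulo $4$.

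When $N$ is odd, $2N+1\equiv3\pmod 4$. The divisors of such a number pair off as $d\leftrightarrow (2N+1)/d$ with opposite residues mod $4$, so the coefficient vanishes. Only $N=2n$ survives, and then $G(q)=\sum_{n\geq0}\bigl(d_{1,4}(4n+1)-d_{3,4}(4n+1)\bigr)q^{2n}$.

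Finally I invoke the classical identity (Jacobi's two-triangular-numbers theorem)
\[
\psi(q)^2=\sum_{n\geq0}\bigl(d_{1,4}(4n+1)-d_{3,4}(4n+1)\bigr)q^n,\qquad \psi(q)=\frac{(q^2;q^2)_\infty^2}{(q;q)_\infty}.
\]
It gives $G(q)=\psi(q^2)^2=(q^4;q^4)_\infty^4/(q^2;q^2)_\infty^2$, which completes the argument.

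The only step that is not routine is this last identification. If a self-contained proof is wanted rather than a citation, I would derive it from the Lambert series for $\theta_3(q)^2$ (the sum of two squares theorem). Writing $4n+1=x^2+y^2$, the solutions with $x$ odd and $y$ even correspond to $2\bigl(\tfrac{x-1}{2}\cdot\tfrac{x+1}{2}\bigr)/2+\dots$ pairs of triangular numbers. Alternatively, one can use Ramanujan's ${}_1\psi_1$ summation directly on $\sum_n (-1)^n q^n/(1-q^{2n+1})$. Everything else is elementary manipulation of absolutely convergent series for $|q|<1$.
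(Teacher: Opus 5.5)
Your proof is correct. The first step, factoring out $F(q)$ and using $\frac{1}{1-q^k}-\frac{1}{1-q^{2k}}=\frac{q^k}{1-q^{2k}}$, is the same reduction the paper makes. The routes differ only in how $F(q)$ is evaluated.

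The paper observes that the summand $\frac{(-1)^mq^m}{1-q^{2m-1}}$ is invariant under $m\mapsto 1-m$, so $F(q)$ is half the bilateral sum over $m\in\mathbb{Z}$. It then evaluates that bilateral sum in one stroke with $\sum_{n\in\mathbb{Z}}\frac{x^n}{1-yq^n}=\frac{(q,q,xy,q/xy;q)_\infty}{(x,q/x,y,q/y;q)_\infty}$, a special case of ${}_1\psi_1$, taken with base $q^2$, $x=-q$, $y=q^{-1}$. This is the alternative you mention at the end; the symmetrization is the ingredient needed to turn your one-sided sum into a bilateral one.

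You instead expand $G$ as a double series, read off its coefficients as $\sum_{d\mid 2N+1}\chi_{-4}(d)$, kill the odd-index coefficients by divisor pairing, and cite Jacobi's two-triangular-numbers theorem.
\begin{itemize}
\item The paper's route is shorter and produces the product directly.
\item Yours explains arithmetically why only even powers of $q$ survive, and exhibits $G(q)=\psi(q^2)^2$ as a counting function.
\item Yours is not more self-contained, since standard proofs of Jacobi's theorem rest on the same theta-function or ${}_1\psi_1$ identities.
\end{itemize}

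Your sketch of a self-contained proof of Jacobi's theorem is garbled. A clean version goes as follows. With $t_a=a(a+1)/2$ and $r_2(M)$ the number of representations of $M$ as a sum of two squares, $n=t_a+t_b$ holds iff $8n+2=(2a+1)^2+(2b+1)^2$. Hence the number of such pairs is $\frac14 r_2(8n+2)=\frac14 r_2(4n+1)=\sum_{d\mid 4n+1}\chi_{-4}(d)$.

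Finally, your sign bookkeeping is right: $-F(q)=q\psi(q^2)^2$, and the inner difference is $-\sum_{k\geq 1}\frac{(-1)^{k-1}q^{2k}}{1-q^{2k}}$. The paper's written proof states both intermediate identities with the opposite sign, as the leading terms $-q$ and $-q^2$ show. The two slips cancel, so its conclusion agrees with yours.
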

\begin{proof}
Recall the following well-known identity from $q$-series theory (see, e.g., \cite[Entry 29]{1}):
$$
\sum_{n\in \mathbb{Z}}\frac{x^n}{1-yq^n}=\frac{(q,q,xy,q/xy;q)_\infty}{(x,q/x,y,q/y;q)_\infty},
$$
Setting appropriate parameters, we obtain
\begin{align*}
\sum_{m \geq 1}\frac{(-1)^mq^m}{1-q^{2m-1}}=\frac{1}{2}\sum_{m \in \mathbb{Z}}\frac{(-1)^mq^m}{1-q^{2m-1}}=q\frac{(q^4;q^4)_\infty^4}{(q^2;q^2)_\infty^2}.
\end{align*}
Furthermore, by observing the terms of $D_1(q)$ and $D_2(q)$, we have
\begin{align*}
\sum_{k\geq 1} \frac{(-1)^kq^k}{1-q^k}-\sum_{k\geq 1} \frac{(-1)^kq^{k}}{1-q^{2k}}=\sum_{k\geq 1}\frac{(-1)^{k-1}q^{2k}}{1-q^{2k}}.
\end{align*}
This completes the proof.
\end{proof}

\section{Future work}

The proof of Conjecture \ref{c1} would be greatly simplified if the following conjecture could be established through a more elementary method.
\begin{Conjecture}
$$
Y(q)=D_2(q)-D_1(q).
$$
\end{Conjecture}

\end{document}